\newtheorem{theorem}{Theorem}[section]
\newtheorem{definition}[theorem]{Definition}
\newtheorem{proposition}[theorem]{Proposition}
\newtheorem{conjecture}[theorem]{Conjecture}
\begin{document}

\title{Complex Hadamard matrices with noncommutative entries}

\author{Teodor Banica}
\address{T.B.: Department of Mathematics, University of Cergy-Pontoise, F-95000 Cergy-Pontoise, France. {\tt teo.banica@gmail.com}}

\subjclass[2000]{46L65 (15B34)}
\keywords{Quantum permutation, Hadamard matrix}

\begin{abstract}
We axiomatize and study the matrices of type $H\in M_N(A)$, having unitary entries, $H_{ij}\in U(A)$, and whose rows and columns are subject to orthogonality type conditions. Here $A$ can be any $C^*$-algebra, for instance $A=\mathbb C$, where we obtain the usual complex Hadamard matrices, or $A=C(X)$, where we obtain the continuous families of complex Hadamard matrices. Our formalism allows the construction of a quantum permutation group $G\subset S_N^+$, whose structure and computation is discussed here.
\end{abstract}

\maketitle

\section*{Introduction}

A complex Hadamard matrix is a square matrix $H\in M_N(\mathbb C)$ whose entries are on the unit circle, $|H_{ij}|=1$, and whose rows are pairwise orthogonal. The basic example of such a matrix is the Fourier one, $F_N=(w^{ij})$ with $w=e^{2\pi i/N}$:
$$F_N=\begin{pmatrix}
1&1&1&\ldots&1\\
1&w&w^2&\ldots&w^{N-1}\\
\ldots&\ldots&\ldots&\ldots&\ldots\\
1&w^{N-1}&w^{2(N-1)}&\ldots&w^{(N-1)^2}
\end{pmatrix}$$

In general, the theory of the complex Hadamard matrices can be regarded as a ``non-standard'' branch of discrete Fourier analysis. For a survey of the potential applications, to quantum physics and quantum information theory questions, see \cite{tz1}.

The Hadamard matrices are also known to parametrize the pairs of orthogonal MASA in the simplest von Neumann algebra, $M_N(\mathbb C)$. This discovery of Popa \cite{pop} has led to deep connections with subfactor theory and some related areas. See \cite{ha1}, \cite{jon}.

One way of understanding the operator algebra significance of the Hadamard matrices is via quantum permutation groups. The general theory here, which goes back to \cite{ba1}, was developed in a number of recent papers, including \cite{ba2}, \cite{bb2}, \cite{bb3}, \cite{bfs}, \cite{bne}, \cite{bi2}, \cite{wa2}.

The idea is very simple. Given an Hadamard matrix $H\in M_N(\mathbb C)$, with rows denoted $H_1,\ldots,H_N\in\mathbb T^N$, the rank one projections $P_{ij}=Proj(H_i/H_j)$ form a magic unitary in the sense of Wang \cite{wa1}, and so produce a representation $C(S_N^+)\to M_N(\mathbb C)$. We can consider then the smallest quantum subgroup $G\subset S_N^+$ producing a factorization of type $C(S_N^+)\to C(G)\to M_N(\mathbb C)$, and call it the ``quantum group associated to $H$''.

As a basic example, for $H=F_N$ we obtain $G=\mathbb Z_N$. Several other things are known about the construction $H\to G$, including the important fact that Jones' planar algebra $P=(P_k)$ associated to $H$ can be recovered from the knowledge of $G$. See \cite{ba1}.

In this paper we develop an extension of this theory. We consider square matrices over a $C^*$-algebra $H\in M_N(A)$, whose entries are unitaries, which commute on all rows and columns, and which are such that the rows and columns are pairwise orthogonal.

In the case $A=\mathbb C$ we obtain the usual Hadamard matrices. More generally, in the commutative case $A=C(X)$ we obtain the continuous families of Hadamard matrices.

In general now, the point with our axioms is that these allow the construction of a magic unitary matrix $P\in M_N(M_N(A))$, via the following formula:
$$(P_{ij})_{ab}=\frac{1}{N}H_{ia}H_{ja}^*H_{jb}H_{ib}^*$$

Thus, associated to our Hadamard matrix $H\in M_N(A)$ is a certain quantum permutation group $G\subset S_N^+$. We will show here how certain basic Hadamard matrix results, mostly related to the correspondence $H\to G$, can be extended to the general case, and we will discuss as well the some features brought by the noncommutative setting.

The paper is organized as follows: 1 is a preliminary section, in 2 we present examples and classification results, in 3 we discuss the general properties of the associated quantum permutation groups, and in 4 we perform some explicit computations.

\medskip

\noindent {\bf Acknowledgements.} I would like to thank K. De Commer for useful discussions.

\section{Definition, basic properties}

Let $A$ be a unital $C^*$-algebra. For most of the applications $A$ will be a commutative algebra, $A=C(X)$ with $X$ being a compact space, or a matrix algebra, $A=M_K(\mathbb C)$ with $K\in\mathbb N$. We will sometimes consider random matrix algebras, $A=M_K(C(X))$, with $X$ being a compact space, and with $K\in\mathbb N$. Our guiding example will be $A=\mathbb C$.

Two row or column vectors over $A$, say $a=(a_1,\ldots,a_N)$ and $b=(b_1,\ldots,b_N)$, will be called orthogonal when $\sum_ia_ib_i^*=\sum_ia_i^*b_i=0$. Observe that, by applying the involution, we have as well $\sum_ib_ia_i^*=\sum_ib_i^*a_i=0$. With this notion in hand, we can formulate:

\begin{definition}
An Hadamard matrix over $A$ is a square matrix $H\in M_N(A)$ such that:
\begin{enumerate}
\item All the entries of $H$ are unitaries, $H_{ij}\in U(A)$.

\item These entries commute on all rows and all columns of $H$.

\item The rows and columns of $H$ are pairwise orthogonal.
\end{enumerate}
\end{definition}

As a first remark, in the simplest case $A=\mathbb C$ the unitary group is the unit circle in the complex plane, $U(\mathbb C)=\mathbb T$, and we obtain the usual complex Hadamard matrices.

In the general commutative case, $A=C(X)$ with $X$ compact space, our Hadamard matrix must be formed of ``fibers'', one for each point $x\in X$. Therefore, we obtain:

\begin{proposition}
The Hadamard matrices $H\in M_N(A)$ over a commutative algebra $A=C(X)$ are exactly the families of complex Hadamard matrices of type
$$H=\{H^x|x\in X\}$$
with $H^x$ depending continuously on the parameter $x\in X$.
\end{proposition}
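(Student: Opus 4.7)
The plan is to use the standard identification $M_N(C(X)) \cong C(X, M_N(\mathbb{C}))$, which lets me view any $H \in M_N(A)$ as a continuous map $x \mapsto H^x \in M_N(\mathbb{C})$ with $(H^x)_{ij} = H_{ij}(x)$. Under this identification, evaluation at $x$ is a $*$-homomorphism, so products and involutions commute with the ``fiber'' operation. The whole proof then reduces to checking that each of the three clauses of Definition 1.1 translates, fiberwise, to the defining properties of a complex Hadamard matrix.

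First I would dispose of condition (2): since $A = C(X)$ is commutative, entries automatically commute, so this axiom is vacuous in this setting. For condition (1), the statement $H_{ij} \in U(C(X))$ amounts to $H_{ij}\overline{H_{ij}} = 1$ in $C(X)$, i.e.\ $|H_{ij}(x)| = 1$ for every $x$, which is precisely the unimodularity of the entries of each $H^x$. For condition (3), the vanishing of $\sum_i H_{ia}H_{ib}^* \in C(X)$ is equivalent to $\sum_i H_{ia}(x)\overline{H_{ib}(x)} = 0$ for every $x$, namely orthogonality of the corresponding columns of $H^x$; the row condition is handled identically.

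Conversely, starting from a continuous family $\{H^x\}_{x \in X}$ of complex Hadamard matrices, setting $H_{ij}(x) = (H^x)_{ij}$ defines entries $H_{ij} \in C(X)$ assembling into an element $H \in M_N(C(X))$ that satisfies all three axioms, by running the same equivalences in reverse. The only nontrivial ingredient is the identification $M_N(C(X)) \cong C(X, M_N(\mathbb{C}))$, which is standard; I do not anticipate a substantive obstacle, as the statement is essentially a bookkeeping exercise in Gelfand duality.
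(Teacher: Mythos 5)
Your proposal is correct and follows the same route as the paper: the paper's (very terse) proof likewise reduces the statement to the fiberwise picture $M_N(C(X))\cong C(X,M_N(\mathbb C))$ coming from Gelfand duality, with the axioms of Definition 1.1 checked pointwise exactly as you do. Your writeup simply supplies the routine details that the paper leaves implicit.
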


\begin{proof}
This follows indeed by combining the above two observations. Observe that, when we wrote $A=C(X)$ in the above statement, we used the Gelfand theorem.
\end{proof}

Let us comment now on the above axioms. Since for $U,V\in U(A)$ the commutation relation $UV=VU$ implies as well the commutation relations $UV^*=V^*U$, $U^*V=VU^*$, $U^*V^*=U^*V^*$, the axiom (2) tells us that the $C^*$-algebras $R_1,\ldots,R_N$ and $C_1,\ldots,C_N$ generated by the rows and the columns of $A$ must be all commutative.

We will be particulary interested in the following type of matrices:

\begin{definition}
An Hadamard matrix $H\in M_N(A)$ is called ``non-classical'' if the $C^*$-algebra generated by its coefficients is not commutative.
\end{definition}

Let us comment now on the axiom (3). According to our definition of orthogonality there are 4 sets of relations to be satisfied, namely for any $i\neq k$ we must have:
$$\sum_jH_{ij}H_{kj}^*=\sum_jH_{ij}^*H_{kj}=\sum_jH_{ji}H_{jk}^*=\sum_jH_{ji}^*H_{jk}=0$$

Now since by axiom (1) all the entries $H_{ij}$ are known to be unitaries, we can replace this formula by the following more general equation, valid for any $i,k$:
$$\sum_jH_{ij}H_{kj}^*=\sum_jH_{ij}^*H_{kj}=\sum_jH_{ji}H_{jk}^*=\sum_jH_{ji}^*H_{jk}=N\delta_{ik}$$

The point now is that, in terms of the matrices $H=(H_{ij})$, $H^*=(H_{ji}^*)$, $H^t=(H_{ji})$ and $\bar{H}=(H_{ij}^*)$, these equations simply read:
$$HH^*=H^*H=H^t\bar{H}=\bar{H}H^t=N1_N$$

So, let us recall now that a square matrix $H\in M_N(A)$ is called ``biunitary'' if both $H$ and $H^t$ are unitaries. In the particular case where $A$ is commutative, $A=C(X)$, we have ``$H$ unitary $\implies$ $H^t$ unitary'', so in this case biunitary means of course unitary.

In terms of this notion, we have the following reformulation of Definition 1.1:

\begin{proposition}
Assume that $H\in M_N(A)$ has unitary entries, which commute on all rows and all columns of $H$. Then the following are equivalent:
\begin{enumerate}
\item $H$ is Hadamard.

\item $H/\sqrt{N}$ is biunitary.

\item $HH^*=H^t\bar{H}=N1_N$.
\end{enumerate}
\end{proposition}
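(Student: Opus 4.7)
My plan is to first observe that both (1) and (2) unfold to the same quadruple of matricial equations
$$HH^*=H^*H=H^t\bar{H}=\bar{H}H^t=N1_N,$$
while (3) retains only two of them. The implication (3) $\Rightarrow$ (1) is the only real content, and I expect to obtain it by using column and row commutativity to bridge across the four equations.

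For (1) $\Leftrightarrow$ (2), I would just unpack definitions. The discussion preceding the statement already recasts the Hadamard axiom as the displayed four equations, and biunitarity of $H/\sqrt{N}$ is by definition the statement that $H/\sqrt{N}$ and $H^t/\sqrt{N}$ are both unitaries in $M_N(A)$, which yields the same four equations.

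For (3) $\Rightarrow$ (1), I would exploit the fact that column commutativity $H_{ij}H_{kj}=H_{kj}H_{ij}$ passes through the involution to give $H_{ij}H_{kj}^*=H_{kj}^*H_{ij}$. This produces the entrywise identity
$$(HH^*)_{ik}=\sum_jH_{ij}H_{kj}^*=\sum_jH_{kj}^*H_{ij}=(\bar{H}H^t)_{ki},$$
so the first equation of (3) automatically forces $\bar{H}H^t=N1_N$ — the transposition $(i,k)\leftrightarrow(k,i)$ being invisible on the scalar $N\delta_{ik}$. A parallel calculation using row commutativity converts $H^t\bar{H}=N1_N$ into $H^*H=N1_N$ via $(H^t\bar{H})_{ik}=\sum_j H_{ji}H_{jk}^*=\sum_j H_{jk}^*H_{ji}=(H^*H)_{ki}$, producing all four equations and hence (1). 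The reverse implication (1) $\Rightarrow$ (3) simply discards two of the four, and is immediate.

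The main obstacle I anticipate is purely bookkeeping: keeping $H$, $H^*$, $H^t$, $\bar{H}$ straight and matching indices so that column commutativity bridges $HH^*$ with $\bar{H}H^t$, and row commutativity bridges $H^t\bar{H}$ with $H^*H$. Beyond this index juggling there is no hard algebra to do, and no appeal to any subtle structural property of $A$ — in particular, nowhere do I need the potentially false statement ``$TT^*=1\Rightarrow T^*T=1$'' for $T\in M_N(A)$.
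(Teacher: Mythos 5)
Your proposal is correct and follows essentially the same route as the paper: (1) $\Leftrightarrow$ (2) by unpacking both into the four equations $HH^*=H^*H=H^t\bar{H}=\bar{H}H^t=N1_N$, and the equivalence with (3) via the row/column commutation of unitary entries, which is exactly what the paper's (much terser) proof invokes. Your explicit index computations merely fill in the details the paper leaves to the reader, and your remark that one never needs ``$TT^*=1\Rightarrow T^*T=1$'' in $M_N(A)$ is a sound observation.
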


\begin{proof}
We know that (1) happens if and only if the axiom (3) in Definition 1.1 is satisfied, and by the above discussion, this axiom (3) is equivalent to (2). Regarding now the equivalence with (3), this follows from the commutation axiom (2) in Definition 1.1.
\end{proof}

Observe that if $H=(H_{ij})$ is Hadamard, so are the matrices $\bar{H}=(H_{ij}^*)$, $H^t=(H_{ji})$ and $H^*=(H_{ji}^*)$. In addition, we have the following result:

\begin{proposition}
The class of Hadamard matrices $H\in M_N(A)$ is stable under the following two operations:
\begin{enumerate}
\item Permuting the rows or columns.

\item Multiplying the rows or columns by central unitaries. 
\end{enumerate}
When successively combining these two operations, we obtain an equivalence relation on the class of Hadamard matrices $H\in M_N(A)$.
\end{proposition}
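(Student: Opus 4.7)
The plan is to verify each Hadamard axiom directly under the two listed operations, and then deduce the equivalence relation claim from the invertibility of the generating operations.

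For (1), permuting rows or columns preserves the underlying multiset of entries, so axiom (1) of Definition 1.1 is immediate. For axiom (2), a row permutation does not change which entries lie in a given row, so commutation within each row is preserved; it permutes the positions within each column, but commutation of a set of elements does not depend on their ordering, so column commutation is preserved. Column permutations are symmetric. For axiom (3), the orthogonality relations $\sum_j H_{ij}H_{kj}^* = N\delta_{ik}$, etc., are symmetric in the row (resp.\ column) indices, so permuting rows or columns simply reindexes these relations.

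For (2), let $U\in Z(A)$ be a central unitary, and let $H'$ be obtained from $H$ by replacing row $i$ with $(UH_{i1},\ldots,UH_{iN})$. Axiom (1) is clear since the product of unitaries is unitary. For axiom (2), commutation along row $i$ reads $(UH_{ij})(UH_{ik}) = U^2H_{ij}H_{ik}$, which equals $(UH_{ik})(UH_{ij})$ by centrality and axiom (2) for $H$; commutation along a column $j$ involves $UH_{ij}$ with $H_{kj}$ for $k\neq i$, and centrality of $U$ reduces this to the commutation of $H_{ij}$ with $H_{kj}$. For axiom (3), using Proposition 1.4 one may simply observe that both $H/\sqrt N$ and $H^t/\sqrt N$ remain biunitary after multiplying one row by $U$, since central unitaries act by unitary row operations both in $H$ and in $H^t$; explicitly
\[
\sum_j (UH_{ij})(UH_{kj})^* = U\Big(\sum_j H_{ij}H_{kj}^*\Big)U^* = N\delta_{ik},
\]
and the remaining three orthogonality relations follow analogously, the key point in each case being that $U$ and $U^*$ can be pulled outside the sum. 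The argument for multiplying columns by central unitaries is identical after transposing.

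For the final claim, reflexivity is the trivial operation; symmetry follows because permutations are invertible and a central unitary $U$ has inverse $U^*$, which is also a central unitary; transitivity is built into the definition, as we take the equivalence relation generated by compositions. Thus these operations generate an equivalence relation on the set of Hadamard matrices in $M_N(A)$.

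The substantive point, and the only place the argument can go wrong, is axiom (2) under the row/column scaling: one needs $U$ to commute with all the entries $H_{ij}$, which is exactly why the hypothesis restricts to central unitaries. In the classical case $A=\mathbb C$ this restriction is vacuous, recovering the standard Hadamard equivalence.
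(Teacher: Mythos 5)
Your proof is correct and follows the same route as the paper, which simply declares the statement ``clear from definitions'': you verify the three axioms of Definition 1.1 directly under permutations and central-unitary scalings, with centrality used exactly where the paper implicitly needs it, and the equivalence-relation claim handled by invertibility of the generators. No gaps; your closing remark about the commutative case matches the paper's own observation that there all unitaries are central.
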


\begin{proof}
This is clear from definitions. Observe that in the commutative case $A=C(X)$ any unitary is central, so we can multiply the rows or columns by any unitary. In particular in this case we can always ``dephase'' the matrix, i.e. assume that its first row and column consist of $1$ entries (note that this operation is not allowed in the general case).
\end{proof}

Let us discuss now the tensor product operation:

\begin{proposition}
Let $H\in M_N(A)$ and $K\in M_M(A)$ be Hadamard matrices, and assume that $<H_{ij}>$ commutes with $<K_{ab}>$. Then the ``tensor product'' $H\otimes K\in M_{NM}(A)$, given by $(H\otimes K)_{ia,jb}=H_{ij}K_{ab}$, is an Hadamard matrix.
\end{proposition}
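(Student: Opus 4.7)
The plan is to verify the three axioms of Definition 1.1 directly, making systematic use of the cross-commutation hypothesis $\langle H_{ij}\rangle\leftrightarrow\langle K_{ab}\rangle$ at every step. For the orthogonality condition, I would use the convenient biunitary reformulation from Proposition 1.5(3), since the tensor product is naturally amenable to the matrix-algebra identities $HH^*=H^t\bar H=N\cdot 1_N$.

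First I would check axiom (1): each entry of $H\otimes K$ is a product $H_{ij}K_{ab}$ of two unitaries which commute by hypothesis, hence is itself a unitary. Next, for axiom (2), I would show that two entries $(H\otimes K)_{ia,jb}$ and $(H\otimes K)_{ia,j'b'}$ in the same row $(ia)$ commute: writing them as $H_{ij}K_{ab}$ and $H_{ij'}K_{ab'}$, I can use the row-commutation for $H$ (entries $H_{ij}, H_{ij'}$ lie on row $i$), the row-commutation for $K$ (entries $K_{ab},K_{ab'}$ lie on row $a$), and the cross-commutation between the $H$-block and the $K$-block to freely rearrange the product. The column case is symmetric.

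For axiom (3), I would verify the identity $(H\otimes K)(H\otimes K)^*=NM\cdot 1_{NM}$, and the dual identity for $(H\otimes K)^t\overline{H\otimes K}$. Computing in coordinates,
\[
[(H\otimes K)(H\otimes K)^*]_{ia,kc}
=\sum_{j,b} H_{ij}K_{ab}H_{kj}^*K_{cb}^*,
\]
and here the cross-commutation hypothesis lets me bring the $H$-factors past the $K$-factors, so that the double sum factors as
\[
\Bigl(\sum_j H_{ij}H_{kj}^*\Bigr)\Bigl(\sum_b K_{ab}K_{cb}^*\Bigr)=N\delta_{ik}\cdot M\delta_{ac}.
\]
The computation for $(H\otimes K)^t\overline{H\otimes K}$ is entirely analogous, and then Proposition 1.5 concludes that $H\otimes K$ is Hadamard.

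The only real subtlety is the bookkeeping of the commutation hypothesis: without $\langle H_{ij}\rangle$ commuting with $\langle K_{ab}\rangle$ one cannot separate the mixed double sum into a product of two single sums, nor conclude that the entries $H_{ij}K_{ab}$ are unitary. All other ingredients are routine once this hypothesis is in place, so I do not expect any genuine obstacle beyond careful use of this assumption.
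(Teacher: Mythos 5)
Your proof is correct and is essentially the paper's own argument: the paper treats Proposition 1.6 as the special case $Q_{ib}=1$ of the deformed tensor product (Theorem 1.7), whose proof consists of exactly the direct verifications you describe (unitarity of entries, row/column commutation, and factorization of the orthogonality sums using the cross-commutation of $\langle H_{ij}\rangle$ and $\langle K_{ab}\rangle$). One minor correction: a product of unitaries in a $C^*$-algebra is automatically unitary, so the commutation hypothesis is not needed for axiom (1), only for axiom (2) and for splitting the mixed sums in axiom (3); also, the biunitary reformulation you invoke is Proposition 1.4 in the paper, not Proposition 1.5.
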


\begin{proof}
This follows from definitions, and is as well a consequence of the more general Theorem 1.7 below, that will be proved with full details.
\end{proof}

Following \cite{dit}, the deformed tensor products are constructed as follows:

\begin{theorem}
Let $H\in M_N(A)$ and $K\in M_M(A)$ be Hadamard matrices, and $Q\in M_{N\times M}(A)$ be a rectangular matrix of unitaries, whose entries commute on rows and columns.  Assume in addition that the algebras $<H_{ij}>$, $<K_{ab}>$, $<Q_{ib}>$ pairwise commute. Then the ``deformed tensor product'' $H\otimes_QK\in M_{NM}(A)$, given by
$$(H\otimes_QK)_{ia,jb}=Q_{ib}H_{ij}K_{ab}$$
is an Hadamard matrix as well.
\end{theorem}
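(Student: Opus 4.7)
The plan is to verify the three axioms of Definition 1.1 for the matrix $T=H\otimes_Q K$, using Proposition 1.4 to simplify axiom (3). The key technical device throughout is that the three subalgebras $\langle H_{ij}\rangle$, $\langle K_{ab}\rangle$, $\langle Q_{ib}\rangle$ pairwise commute, so factors drawn from distinct subalgebras can be reordered freely.

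Axiom (1) is immediate, since $T_{ia,jb}=Q_{ib}H_{ij}K_{ab}$ is a product of unitaries from pairwise commuting subalgebras. For axiom (2), on a fixed row $(i,a)$ two entries are $Q_{ib}H_{ij}K_{ab}$ and $Q_{ib'}H_{ij'}K_{ab'}$. Expanding their product and using cross-algebra commutation to segregate $Q$-, $H$- and $K$-factors, the commutativity reduces to the three identities $Q_{ib}Q_{ib'}=Q_{ib'}Q_{ib}$, $H_{ij}H_{ij'}=H_{ij'}H_{ij}$, $K_{ab}K_{ab'}=K_{ab'}K_{ab}$, which are exactly the row commutation hypotheses on $Q$, $H$, $K$. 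The column case is symmetric, using column commutation of the three matrices.

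For axiom (3), by Proposition 1.4 it suffices to check $TT^*=T^t\bar T=NM\cdot 1_{NM}$. Computing and rearranging using the pairwise commutation of algebras,
\[
(TT^*)_{ia,i'a'}=\sum_{j,b}(Q_{ib}Q_{i'b}^*)(H_{ij}H_{i'j}^*)(K_{ab}K_{a'b}^*).
\]
The middle factor depends on $j$ only and the outer two on $b$ only, and these two blocks commute (being in commuting subalgebras), so the double sum factors as
\[
\Big(\sum_j H_{ij}H_{i'j}^*\Big)\Big(\sum_b Q_{ib}Q_{i'b}^* K_{ab}K_{a'b}^*\Big)=N\delta_{ii'}\sum_b Q_{ib}Q_{i'b}^*K_{ab}K_{a'b}^*.
\]
When $i=i'$ the $Q$-block collapses to $1$ by unitarity and $\sum_b K_{ab}K_{a'b}^*=M\delta_{aa'}$, producing $NM\delta_{aa'}$; when $i\neq i'$ the Kronecker prefactor vanishes. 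The identity $T^t\bar T=NM\cdot 1$ is proved symmetrically: one carries out the $a$-sum first via $\sum_a K_{ab}K_{ab'}^*=M\delta_{bb'}$ (the relation $K^t\bar K=M\cdot 1$ supplied by Proposition 1.4 applied to $K$), and then the $i$-sum via $\sum_i H_{ij}H_{ij'}^*=N\delta_{jj'}$.

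The only real obstacle is bookkeeping: each reordering must be justified by combining cross-algebra commutation (for factors from different subalgebras) with the internal row/column commutation of $H$, $K$, $Q$ (for factors from the same subalgebra on a common row or column). Once these are granted, the computation reduces cleanly to the Hadamard properties of $H$ and $K$ and to unitarity of the $Q$-entries; notably no Hadamard-type orthogonality of $Q$ itself is used, which matches the fact that $Q$ is only assumed to be a rectangular matrix of unitaries with the stated commutation properties.
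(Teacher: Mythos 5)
Your proof is correct and follows essentially the same route as the paper: a direct verification of Definition 1.1, checking commutation on rows and columns by segregating the $Q$-, $H$-, $K$-factors, and proving the two orthogonality relations by summing first over $j$ (using $HH^*=N1$) and first over $a$ (using $K^t\bar K=M1$), with unitarity collapsing the $Q$-terms. The only cosmetic difference is that you phrase the orthogonality step through Proposition 1.4 rather than writing the sums $\sum_{jb}L_{ia,jb}L_{kc,jb}^*$ and $\sum_{ia}L_{ia,jb}L_{ia,kc}^*$ directly, which is legitimate since you establish axioms (1) and (2) beforehand.
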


\begin{proof}
First, the entries of $L=H\otimes_QK$ are unitaries, and its rows are orthogonal:
\begin{eqnarray*}
\sum_{jb}L_{ia,jb}L_{kc,jb}^*
&=&\sum_{jb}Q_{ib}H_{ij}K_{ab}\cdot Q_{kb}^*K_{cb}^*H_{kj}^*
=N\delta_{ik}\sum_bQ_{ib}K_{ab}\cdot Q_{kb}^*K_{cb}^*\\
&=&N\delta_{ik}\sum_jK_{ab}K_{cb}^*
=NM\cdot\delta_{ik}\delta_{ac}
\end{eqnarray*}

The orthogonality of columns can be checked as follows:
\begin{eqnarray*}
\sum_{ia}L_{ia,jb}L_{ia,kc}^*
&=&\sum_{ia}Q_{ib}H_{ij}K_{ab}\cdot Q_{ic}^*K_{ac}^*H_{ik}^*
=M\delta_{bc}\sum_iQ_{ib}H_{ij}\cdot Q_{ic}^*H_{ik}^*\\
&=&M\delta_{bc}\sum_iH_{ij}H_{ik}^*
=NM\cdot\delta_{jk}\delta_{bc}
\end{eqnarray*}

For the commutation on rows we use in addition the commutation on rows for $Q$:
\begin{eqnarray*}
L_{ia,jb}L_{kc,jb}
&=&Q_{ib}H_{ij}K_{ab}\cdot Q_{kb}H_{kj}K_{cb}
=Q_{ib}Q_{kb}\cdot H_{ij}H_{kj}\cdot K_{ab}K_{cb}\\
&=&Q_{kb}Q_{ib}\cdot H_{kj}H_{ij}\cdot K_{cb}K_{ab}
=Q_{kb}H_{kj}K_{cb}\cdot Q_{ib}H_{ij}K_{ab}
=L_{kc,jb}L_{ia,jb}
\end{eqnarray*}

The commutation on columns is similar, using the commutation on columns for $Q$:
\begin{eqnarray*}
L_{ia,jb}L_{ia,kc}
&=&Q_{ib}H_{ij}K_{ab}\cdot q_{ic}H_{ik}K_{ac}
=Q_{ib}Q_{ic}\cdot H_{ij}H_{ik}\cdot K_{ab}K_{ac}\\
&=&Q_{ic}Q_{ib}\cdot H_{ik}H_{ij}\cdot K_{ac}K_{ab}
=Q_{ic}H_{ik}K_{ac}\cdot Q_{ib}H_{ij}K_{ab}
=L_{ia,kc}L_{ia,jb}
\end{eqnarray*}

Thus all the axioms are satisfied, and $L$ is indeed Hadamard.
\end{proof}

As a basic example, we have the following construction:

\begin{proposition}
The following matrix is Hadamard,
$$M=\begin{pmatrix}x&y&x&y\\ x&-y&x&-y\\ z&t&-z&-t\\ z&-t&-z&t\end{pmatrix}$$
for any unitaries $x,y,z,t$ satisfying $[x,y]=[x,z]=[y,t]=[z,t]=0$.
\end{proposition}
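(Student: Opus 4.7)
The natural strategy is to recognize $M$ as a deformed tensor product in the sense of Theorem 1.7, with the two base matrices being the scalar Fourier matrix $F_2=\begin{pmatrix}1&1\\1&-1\end{pmatrix}$, and with deformation matrix
$$Q=\begin{pmatrix}x&y\\z&t\end{pmatrix}\in M_{2\times 2}(A).$$
Concretely, I would claim that $M=F_2\otimes_Q F_2$, listing the double indices $(ia)$ and $(jb)$ in the lexicographic order $11,12,21,22$.

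The identification is a direct expansion of the formula $(F_2\otimes_Q F_2)_{ia,jb}=Q_{ib}(F_2)_{ij}(F_2)_{ab}$. Reading off the four rows gives $(x,y,x,y)$, $(x,-y,x,-y)$, $(z,t,-z,-t)$, $(z,-t,-z,t)$, exactly the four rows of $M$. This is the only genuinely combinatorial step in the argument.

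Once the identification is made, it remains to verify the hypotheses of Theorem 1.7. The two copies of $F_2$ are manifestly Hadamard. The entries of $Q$ are unitaries by assumption, and the commutation requirement on rows and columns of $Q$ translates precisely into the four relations $[x,y]=[z,t]=[x,z]=[y,t]=0$, which are the given hypotheses. Finally, since every entry of $F_2$ is a scalar in $A$, the subalgebras $\langle H_{ij}\rangle$ and $\langle K_{ab}\rangle$ are contained in $\mathbb{C}\cdot 1_A$, so the pairwise commutation of $\langle H_{ij}\rangle$, $\langle K_{ab}\rangle$, $\langle Q_{ib}\rangle$ is automatic. Applying Theorem 1.7 concludes.

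I expect the only mild obstacle to be the pattern-matching itself: recognizing that this specific $4\times 4$ matrix is the Dita deformation of $F_2\otimes F_2$ with the prescribed $Q$, and getting the ordering of double indices consistent. A purely hands-on alternative — checking unitarity, row/column commutation, and the four orthogonality relations of Definition~1.1 directly for $M$ — would work but would generate a long case analysis, whereas the deformed-tensor-product viewpoint makes the proof essentially immediate and also explains why this particular block pattern is Hadamard.
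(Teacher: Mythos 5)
Your proposal is correct and follows exactly the paper's own argument: identifying $M$ as the deformed tensor product $F_2\otimes_Q F_2$ with $Q=\begin{pmatrix}x&y\\ z&t\end{pmatrix}$ and invoking Theorem 1.7, whose hypotheses reduce to the stated commutation relations since the entries of $F_2$ are scalars. Nothing further is needed.
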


\begin{proof}
This follows indeed from Theorem 1.7, because we have:
$$\begin{pmatrix}1&1\\ 1&-1\end{pmatrix}\otimes_{\begin{pmatrix}x&y\\ z&t\end{pmatrix}}\begin{pmatrix}1&1\\ 1&-1\end{pmatrix}
=\begin{pmatrix}x&y&x&y\\ x&-y&x&-y\\ z&t&-z&-t\\ z&-t&-z&t\end{pmatrix}$$

Observe that $M$ is generically non-classical, in the sense of Definition 1.3.
\end{proof}

\section{Classification results, examples}

The usual complex Hadamard matrices were classified in \cite{ha1} at $N=2,3,4,5$. In this section we investigate the case of the general Hadamard matrices. We use the equivalence relation constructed in Proposition 1.5 above. We first have:

\begin{proposition}
The $2\times 2$ Hadamard matrices are all classical, and are all equivalent to the Fourier matrix $F_2$.
\end{proposition}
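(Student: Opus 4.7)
The claim splits into two parts: (a) the $C^*$-algebra $B=C^*\langle H_{ij}\rangle$ generated by the four entries is commutative, so that $H$ is classical; and (b) every such classical $2\times 2$ Hadamard matrix is equivalent to $F_2$.

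For (a), write $H=\begin{pmatrix}a&b\\c&d\end{pmatrix}$. Axiom (2) of Definition 1.1 already supplies the row relations $ab=ba$, $cd=dc$ and the column relations $ac=ca$, $bd=db$, so what remains is $ad=da$ and $bc=cb$. The only extra data to use is axiom (3), and I would exploit the column orthogonality $a^*b+c^*d=0$ in its solved form
$$b=-ac^*d.$$
Feeding this into the row commutation $ab=ba$ and cancelling one factor of $a$ on the left gives $ac^*d=c^*da$. Since $c$ is unitary, $ac=ca$ implies $ac^*=c^*a$, so this collapses to $c^*ad=c^*da$, i.e.\ $ad=da$. A direct computation of $bc$ and $cb$ from the formula $b=-ac^*d$, using only $cd=dc$, $ca=ac$ and $c^*c=cc^*=1$, shows $bc=cb=-ad$.

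For (b), the subalgebra $B$ is now commutative, so by Gelfand $B=C(X)$, and Proposition 1.2 presents $H$ as a continuous family of classical $2\times 2$ complex Hadamard matrices. Inside $B$ every unitary is central, so the dephasing permitted by Proposition 1.5 applies: multiply row $i$ by $H_{i1}^*$ to normalise the first column to $1$'s, then multiply column $2$ by the adjoint of its new $(1,2)$-entry to normalise the first row as well. The resulting matrix has shape $\begin{pmatrix}1&1\\1&x\end{pmatrix}$, and row orthogonality forces $1+x^*=0$, hence $x=-1$, so we land on $F_2$.

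The substantive step is (a): the entries $a$ and $d$ lie on the antidiagonal and share neither a row nor a column, so their commutation is invisible from axiom (2) alone. The whole argument hinges on the orthogonality identity $b=-ac^*d$ and on the fact that a column commutation $[a,c]=0$ for unitary $c$ upgrades to $[a,c^*]=0$, which is precisely what lets one finish the cancellation. Once classicality is in hand, part (b) is the standard commutative dephasing.
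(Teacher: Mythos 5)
Your proposal is correct and follows essentially the same route as the paper: you solve one column-orthogonality relation for an entry (you use $a^*b+c^*d=0$ to get $b=-ac^*d$, the paper uses $AB^*+CD^*=0$ to get $A=-CD^*B$ and $C=-AB^*D$), deduce the two missing antidiagonal commutations from the row/column commutations, and then dephase the now-commutative matrix to reach $F_2$. The algebraic manipulations and the dephasing step check out, so nothing further is needed.
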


\begin{proof}
Consider indeed an arbitrary $2\times 2$ Hadamard matrix:
$$H=\begin{pmatrix}A&B\\ C&D\end{pmatrix}$$

We already know that $A,D$ each commute with $B,C$. Now since we have $AB^*+CD^*=0$, we deduce that $A=-CD^*B$ commutes with $D$, and that $C=-AB^*D$ commutes with $B$. Thus our matrix is classical, any since all unitaries are now central, we can dephase our matrix, which follows therefore to be the Fourier matrix $F_2$.
\end{proof}

Let us discuss now the case $N=3$. Here the classification in the classical case uses the key fact that any formula of type $a+b+c=0$, with $|a|=|b|=|c|=1$, must be, up to a permutation of terms, a ``trivial'' formula of type $a+ja+j^2a=0$, with $j=e^{2\pi i/3}$. 

Here is the noncommutative analogue of this simple fact:

\begin{proposition}
Assume that $a+b+c=0$ is a vanishing sum of unitaries. Then this sum must be of type $a+wa+w^2a=0$, with $w$ unitary satisfying $1+w+w^2=0$.
\end{proposition}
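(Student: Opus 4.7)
The plan is to reduce to the case $a=1$ by right-multiplication, then extract $w$ as a ratio of the remaining unitaries and check algebraically that it satisfies $1+w+w^2=0$.

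More precisely, I would set $w = ba^*$ and $v = ca^*$. Both are unitaries (as products of unitaries), and right-multiplying the hypothesis $a+b+c=0$ by $a^*$ gives
\begin{equation*}
1 + w + v = 0,
\end{equation*}
so $v = -1-w$. Now I use the unitarity of $v$: from $v^*v = 1$, i.e.\ $(-1-w^*)(-1-w)=1$, expanding yields $1 + w + w^* + w^*w = 1$, and since $w^*w=1$ this collapses to
\begin{equation*}
w + w^* = -1.
\end{equation*}
The identity $1+w+w^2=0$ is then obtained by writing $w^* = -1-w$ and multiplying on the left by $w$: $1 = ww^* = -w - w^2$, so $1+w+w^2=0$ as required.

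Having produced $w$, the representation of the original sum is immediate: $b = wa$ by definition of $w$, and $c = -a - b = -(1+w)a = w^2 a$ using the cubic relation just derived. Thus $a + b + c = a + wa + w^2 a = 0$ in the required form.

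The only thing that might look like an obstacle is that we are working noncommutatively, so one has to be careful never to commute $w$ past $a$ or to commute $w$ and $w^*$ in the wrong order; but in the argument above the two key uses of unitarity are the single identity $w^*w = 1$ and the single identity $ww^* = 1$ (to pass from $w^* = -1-w$ to $1 + w + w^2 = 0$), and these are symmetric enough that no commutativity is actually invoked. So the proof is essentially a one-line computation once $w = ba^*$ is chosen.
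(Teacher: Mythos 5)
Your proof is correct and follows essentially the same route as the paper: both define $w=ba^*$ and use unitarity to force the relation $1+w+w^2=0$ (the paper via $(a+b)(a+b)^*=1$, you via unitarity of $v=ca^*$ after normalizing by $a^*$, a negligible difference). Your explicit final check that $b=wa$ and $c=w^2a$ is a detail the paper leaves implicit, but nothing essential differs.
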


\begin{proof}
Since $-c=a+b$ is unitary we have $(a+b)(a+b)^*=1$, so $ab^*+ba^*=-1$. Thus $ab^*ba^*+(ba^*)^2=-ba^*$, and with $w=ba^*$ we obtain $1+w^2=-w$, and we are done. 
\end{proof}

With this result in hand, we can start the $N=3$ classification. We first have the following technical result, that we will improve later on:

\begin{proposition}
Any $3\times 3$ Hadamard matrix must be of the form
$$H=\begin{pmatrix}a&b&c\\ ua&uv^*w^2vb&uv^*wvc\\ va&wvb&w^2vc\end{pmatrix}$$
with $w$ being subject to the equation $1+w+w^2=0$.
\end{proposition}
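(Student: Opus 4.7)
The strategy is to start with an arbitrary $3\times 3$ Hadamard matrix
$$H=\begin{pmatrix}a&b&c\\d&e&f\\g&h&i\end{pmatrix}$$
and reduce it piece by piece to the claimed form, using the column-commutativity axiom and Proposition 2.2 as the only non-trivial ingredients. First I normalize the first column. Setting $u = da^*$ and $v = ga^*$, the commutations $[a,d] = [a,g] = [d,g] = 0$ immediately give that $u, v$ are unitaries pairwise commuting and commuting with $a$, with $d = ua$ and $g = va$.

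Next I apply Proposition 2.2 twice to extract the roots of unity. Writing the row 1 versus row 3 orthogonality in the ``adjoint-first'' form $a^*g + b^*h + c^*i = 0$, the first term is $v$, and Proposition 2.2 supplies a unitary $w$ with $1 + w + w^2 = 0$ and $b^* h = wv$, $c^* i = w^2 v$. Column-2 commutativity $[b, h] = 0$ forces $b$ to commute with $wv$, so $h$ rewrites as $wvb$; likewise $i = w^2vc$. The same method applied to rows 1 and 2 produces a unitary $p$ with $1+p+p^2=0$ satisfying $e = pub$ and $f = p^2uc$ after an analogous rearrangement via column commutation.

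The final step is to match the second row to the stated form, which amounts to identifying $p$ with the conjugate $uv^*w^2vu^*$. Expanding the row 2 versus row 3 orthogonality $\sum_j H_{2j}^* H_{3j} = 0$ using the expressions already obtained, and collapsing it by means of the commutations of $b$ with $pu$ and $wv$ and of $c$ with $p^2u$ and $w^2v$, reduces everything to a purely algebraic identity coupling $p$ and $w$ through the twist $uv^*$. Combined with $1 + p + p^2 = 0$ and $1 + w + w^2 = 0$, this identity pins $p$ down to the claimed conjugate form, so that $pub = uv^*w^2vb$ and $p^2uc = uv^*wvc$.

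The main obstacle lies precisely in this last identification: in the noncommutative setting nothing automatically commutes, so tracking the commutations forced by the Hadamard axioms through the orthogonality sum requires care, and the shape of the expression for $p$ depends on how one chooses to arrange the various products against one another. Consistently with this, the author flags the statement as ``technical'' and promises a sharper version later in the paper.
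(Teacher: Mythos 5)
Your first two steps are sound and essentially coincide with the paper's: writing $H_{21}=ua$, $H_{31}=va$ with $a,u,v$ commuting, and applying Proposition 2.2 to the row~1 versus row~3 orthogonality $v+b^*H_{32}+c^*H_{33}=0$ to get $1+w+w^2=0$, $H_{32}=wvb$, $H_{33}=w^2vc$ (your ``adjoint-first'' form is equivalent to the paper's $\sum_j H_{3j}H_{1j}^*=0$). The introduction of $p$ via rows 1 and 2, giving $H_{22}=pub$, $H_{23}=p^2uc$, is also correct. The genuine gap is in your last step. If you actually expand the row~2 versus row~3 orthogonality in the form $\sum_j H_{2j}^*H_{3j}=0$ and use the column commutations exactly as you propose, the $u$'s and $v$'s cancel: one gets $u^*\bigl(1+p^*w+(p^*)^2w^2\bigr)v=0$, hence $1+p^*w+(p^*)^2w^2=0$, and Proposition 2.2 then yields $(p^*)^2w^2=(p^*w)^2$, i.e.\ $[p,w]=0$, from which (commuting unitaries whose spectra lie in the roots of $1+z+z^2=0$, with $p^*w$ also a root) one concludes $p=w^2$. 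So this relation does \emph{not} couple $p$ and $w$ ``through the twist $uv^*$'', and it cannot pin $p$ to the conjugate $uv^*w^2vu^*$: passing from $p=w^2$ (i.e.\ $H_{22}=w^2ub$) to the stated form $H_{22}=uv^*w^2vb$ would require $[uv^*,w]=0$, which is precisely the nontrivial commutation established only later, in Proposition 2.5, by the ``average of two unitaries'' argument. Thus the final identification, as you describe it, is asserted rather than proved, and the asserted shape of the identity is wrong.

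The paper avoids this entirely by using the \emph{column} orthogonality for the second row: from $a^*b+a^*u^*H_{22}+a^*v^*wvb=0$ (first column against second column), multiplying on the left by $va$ and on the right by $b^*v^*$ gives $1+vu^*H_{22}b^*v^*+w=0$, whence $vu^*H_{22}b^*v^*=w^2$ and $H_{22}=uv^*w^2vb$ directly, and similarly for $H_{23}$; no new root $p$ and no row~2 versus row~3 relation are needed. If you prefer your rows-only route, note that what it proves (when carried out correctly) is the different-looking formula $H_{22}=w^2ub$, $H_{23}=wuc$; that statement is also true, but it is not the statement of this proposition, and your proof as written does not establish the claimed form.
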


\begin{proof}
Consider an arbitrary Hadamard matrix $H\in M_3(A)$. We define $a,b,c,u,v,w$ as for that part of the matrix to be exactly as in the statement, as follows:
$$H=\begin{pmatrix}a&b&c\\ ua&x&y\\ va&wvb&z\end{pmatrix}$$

Let us look first at the scalar product between the first and third row:
$$vaa^*+wvbb^*+zc^*=0$$

By simplifying we obtain $v+wv+zc^*=0$, and now by using Proposition 2.2 we conclude that we have $1+w+w^2=0$, and that $zc^*=w^2v$, and so $z=w^2vc$, as claimed.

The scalar products of the first column with the second and third ones are:
$$a^*b+a^*u^*x+a^*v^*wvb=0$$
$$a^*c+a^*u^*y+a^*v^*w^2vc=0$$

By multiplying to the left by $va$, and to the right by $b^*v^*$ and $c^*v^*$, we obtain:
$$1+vu^*xb^*v^*+w=0$$
$$1+vu^*yc^*v^*+w^2=0$$

Now by using Proposition 2.2 again, we obtain $vu^*xb^*v^*=w^2$ and $vu^*yc^*v^*=w$, and so $x=uv^*w^2vb$ and $y=uv^*wvc$, and we are done.
\end{proof}

We can already deduce now a first classification result, as follows:

\begin{proposition}
There is no Hadamard matrix $H\in M_3(A)$ with self-adjoint entries.
\end{proposition}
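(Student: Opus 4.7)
The plan is to apply the normal form from Proposition 2.3, which writes any $3\times 3$ Hadamard matrix as
$$H=\begin{pmatrix}a&b&c\\ ua&uv^*w^2vb&uv^*wvc\\ va&wvb&w^2vc\end{pmatrix}$$
with $1+w+w^2=0$, and then to derive a contradiction on $w$ from the self-adjointness hypothesis. The key initial observation is that a self-adjoint unitary satisfies $x^2=1$, so every entry is a symmetry, and the commutation axiom makes each row and each column a commutative algebra of symmetries.

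First I would pin down the auxiliary unitaries $u,v,w$. From the column 1 commutation $a(ua)=(ua)a$ and $a^2=1$ one obtains $aua=u$, which combined with $(ua)^*=ua$ forces $u=u^*$; analogously $v=v^*$, and both commute with $a$. Then the column 2 commutation of $b$ with $wvb$, combined with the self-adjointness $(wvb)^*=bvw^*=wvb$, simplifies using $b^2=v^2=1$ to the twisting identity $vwv=w^*$ (equivalently $wv=vw^*$).

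The heart of the argument is row 3 commutativity. Expanding the three pairwise commutations of $va$, $wvb$, $w^2vc$ with the help of $vwv=w^*$, $vw^2v=(w^*)^2$, and the analogous column identities $bwv=wvb$ and $cw^2v=w^2vc$, one arrives at the three equations
$$aw^*b=bwa,\qquad awc=cw^*a,\qquad bw^*c=cwb.$$
A short manipulation (using $x^2=1$ and the pairwise commutation of $a,b,c$ from row 1) recasts each of these as a conjugation statement, namely $(ab)w(ab)=w^*$, $(ac)w(ac)=w^*$, and $(bc)w(bc)=w^*$. Combining the first and third, and using $(ab)(bc)=ac$ (since $b^2=1$ and $a,b,c$ commute), yields $(ac)w(ac)=w$; comparing with the middle identity then forces $w=w^*$. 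But this is incompatible with $1+w+w^2=0$, since a self-adjoint unitary satisfies $w^2=1$ and the relation collapses to $2+w=0$, which has no unitary solution.

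The main obstacle is the third step: one must notice the ``two parallel plus one twisted'' pattern among the three row-3 relations, since that is precisely what allows the resulting conjugation identities to combine and squeeze $w=w^*$. The preparatory work on columns is routine once one commits to systematically tracking self-adjointness through the commutation conditions.
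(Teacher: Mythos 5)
Your argument is correct. Starting from the normal form of Proposition 2.3, your column computations ($u=u^*$, $v=v^*$, $[a,u]=[a,v]=0$, $vwv=w^*$, $bwv=wvb$, $cw^2v=w^2vc$) are exactly right, the three relations you extract from the pairwise commutation of $va$, $wvb$, $w^2vc$ check out, and the conjugation identities $(ab)w(ab)=(ac)w(ac)=(bc)w(bc)=w^*$ do combine, via $(ab)(bc)=ac$, to force $w=w^*$, which is indeed incompatible with $1+w+w^2=0$ for a unitary $w$. The route differs from the paper's in its execution, though the skeleton (Proposition 2.3 plus a contradiction on $w$) is the same: the paper first uses self-adjointness to upgrade the normal form to one with entries $a,b,c,ua,uwb,uw^2c,va,wvb,w^2vc$, recording the relations $u^2=v^2=(uw)^2=(vw)^2=1$, and then a single column-two commutation $[uwb,wvb]=0$ collapses everything to $w=1$, contradicting $1+w+w^2=0$. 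Your proof never touches the second row or the unitary $u$ at all: the contradiction is squeezed entirely out of the commutativity of the third row against the column relations, ending at $w=w^*$ rather than $w=1$. What the paper's route buys is brevity and an explicit simplified self-adjoint normal form (reused nowhere else, but conceptually clean); what yours buys is economy of hypotheses --- it shows the obstruction already lives in rows one and three together with the columns, and it avoids having to justify the relation $(uw)^2=1$, which the paper asserts rather tersely. Either way the statement is proved.
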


\begin{proof}
We use Proposition 2.3. Since the entries are idempotents, we have:
$$a^2=b^2=c^2=u^2=v^2=(uw)^2=(vw)^2=1$$

It follows that our matrix is in fact of the following form:
$$H=\begin{pmatrix}a&b&c\\ ua&uwb&uw^2c\\ va&wvb&w^2vc\end{pmatrix}$$

The commutation between $H_{22},H_{23}$ reads:
\begin{eqnarray*}
[uwb,wvb]=0
&\implies&[uw,wv]=0
\implies uwwv=wvuw\\
&\implies&uvw=vuw^2
\implies w=1
\end{eqnarray*}

Thus we have reached to a contradiction, and we are done.
\end{proof}

Let us go back now to the general case. We have the following technical result, which refines Proposition 2.3 above, and which will be in turn further refined, later on:

\begin{proposition}
Any $3\times 3$ Hadamard matrix must be of the form
$$H=\begin{pmatrix}a&b&c\\ ua&w^2ub&wuc\\ va&wvb&w^2vc\end{pmatrix}$$
where $(a,b,c)$ and $(u,v,w)$ are triples of commuting unitaries, and $1+w+w^2=0$.
\end{proposition}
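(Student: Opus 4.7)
The plan is to start from the form already established in Proposition~2.3 and force enough commutation to collapse the entries $uv^*w^2vb$ and $uv^*wvc$ to $w^2ub$ and $wuc$. Specifically, once $(u,v,w)$ is shown to be a commuting triple and each of $a,b,c$ commutes with each of $u,v,w$, the identity $v^*w^kv=w^k$ gives the simplification for free; the triple $(a,b,c)$ is already commuting by the row-1 axiom.

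First I extract commutations from the column-commutation axioms. Column~1 gives $[a,u]=[a,v]=0$ and then $[u,v]=0$ by cancellation of unitaries. Column~2 gives $[b,wv]=0$ and $[b,uv^*w^2v]=0$ from the two ``easy'' commutations, and then the third column-2 commutation $[uv^*w^2vb,wvb]=0$, combined with the preceding, reduces to the operator identity $[uv^*w^2v,wv]=0$. Expanding this using $[u,v]=0$ yields the key simplification
$$uv^*w^2v=wuw.\qquad(\ast)$$
The analogous column-3 analysis produces $[c,w^2v]=[c,uv^*wv]=0$ and the companion identity
$$uv^*wv=w^2uw^2.\qquad(\ast\ast)$$
Multiplying $(\ast)$ and $(\ast\ast)$ while using $(v^*wv)(v^*w^2v)=1$ (a consequence of $w^3=1$) gives $[u^*wu,w]=0$, a necessary but not yet sufficient relation.

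Next I bring in the row commutations to upgrade $[u^*wu,w]=0$ to the full relation $[u,w]=0$. Substituting $(\ast)$, the row-2 commutation $[ua,uv^*w^2vb]=0$ reads $[ua,wuwb]=0$; after clearing $a$ via $[a,u]=0$ and $b$ via $[b,wuw]=0$ (the column-2 relation above) and using $[a,b]=0$, it collapses to $(uw)^2=(wu)^2$. A parallel use of the other row-2 and row-3 commutations yields $[u^2,w]=0$. Combining $[u^*wu,w]=0$, $(uw)^2=(wu)^2$ and $[u^2,w]=0$ forces $[u,w]=0$, whereupon $(\ast)$ becomes $v^*w^2v=w^2$, i.e.\ $[v,w]=0$. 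Finally, the remaining row commutations, now much simplified, deliver the cross-commutations of $a,b,c$ with $u,v,w$. Substituting everything back gives $uv^*w^2vb=w^2ub$ and $uv^*wvc=wuc$, and the matrix takes the announced form.

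The main obstacle is the middle step. The column identities $(\ast)$ and $(\ast\ast)$ on their own admit ``non-abelian shadows'' --- for instance an involution $u$ with $uwu=w^{-1}$ satisfies $[u^*wu,w]=0$ but not $[u,w]=0$ --- so one must genuinely chain the row-commutation axioms against $(\ast)$, $(\ast\ast)$ and the cube-root-of-unity relation $w^3=1$ to disentangle the ``coordinate'' generators $a,b,c$ from the ``twist'' generators $u,v,w$ and extract the pairwise commutation $[u,w]=0$. Everything else is bookkeeping.
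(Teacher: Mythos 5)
There is a genuine gap, in fact two. First, your passage from the row-2 commutation $[ua,\,wuwb]=0$ to $(uw)^2=(wu)^2$ is not justified. With the relations available at that point ($[a,u]=0$, $[b,wuw]=0$, $[a,b]=0$) the commutation only rewrites as
$$a\,(uwuw)\,b \;=\; b\,(wuwu)\,a,$$
and since $a\neq b$ and no commutation between $\{a,b\}$ and $\{u,w\}$ is known at this stage, the letters $a,b$ cannot be cancelled: for commuting unitaries $a,b$ and any unitary $X$, the unitary $Y=b^*aXba^*$ satisfies $aXb=bYa$ without $X=Y$. The same objection applies to your ``parallel'' derivation of $[u^2,w]=0$. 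Note that such cross-commutations are exactly what the paper postpones to Theorem 2.6, where they require further work; the present proposition does not claim them, and trying to get them from row relations is essentially the harder content of the next theorem. Second, even granting those intermediate relations, your final step fails: $[u^*wu,w]=0$, $(uw)^2=(wu)^2$, $[u^2,w]=0$ and $1+w+w^2=0$ do \emph{not} force $[u,w]=0$. In the two-dimensional irreducible representation of $S_3$, take $w=\mathrm{diag}(\omega,\omega^2)$ with $\omega=e^{2\pi i/3}$ and $u$ the flip matrix; then $1+w+w^2=0$, $u^2=1$, $u^*wu=w^2$, so all three of your relations hold, yet $uw\neq wu$. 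So the ``dihedral shadow'' you flag as the main obstacle is not eliminated by your chain of relations.

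The paper avoids both problems by never leaving the columns: the commutations inside column 2 and column 3 involve a common last letter ($b$, resp.\ $c$) which cancels cleanly, giving, with $e=uv^*$, the relations $[ew,wv]=0$ and $[ew^2,w^2v]=0$; then the \emph{linear} identity $w^2=-1-w$ turns the second relation into $ewv=\tfrac12(wve+vew)$, and the $C^*$-rigidity that a unitary which is an average of two unitaries forces all three to coincide yields $ewv=wve=vew$, hence $[w,e]=[w,v]=0$ and the stated form with $u=ev$. This analytic averaging step is the ingredient that genuinely kills the dihedral-type configurations, and it is absent from your argument; purely multiplicative (group-like) consequences of the commutation axioms, which is all you use after $(\ast)$ and $(\ast\ast)$, cannot suffice, as the $S_3$ example shows.
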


\begin{proof}
We use Proposition 2.3. With $e=uv^*$, the matrix there becomes:
$$H=\begin{pmatrix}a&b&c\\ eva&ew^2vb&ewvc\\ va&wvb&w^2vc\end{pmatrix}$$

The commutation relation between $H_{22},H_{32}$ reads:
\begin{eqnarray*}
[ew^2vb,wvb]=0
&\implies&[ew^2v,wv]=0
\implies ew^2vwv=wvew^2v\\
&\implies&ew^2v=wvew
\implies [ew,wv]=0
\end{eqnarray*}

Similarly, the commutation between $H_{23},H_{33}$ reads:
\begin{eqnarray*}
[ewvc,w^2vc]=0
&\implies&[ewv,w^2v]=0
\implies ewvw^2v=w^2vewv\\
&\implies&ewv=w^2vew^2
\implies[ew^2,w^2v]=0
\end{eqnarray*}

We can rewrite this latter relation by using the formula $w^2=-1-w$, and then, by further processing it by using the first relation, we obtain:
\begin{eqnarray*}
[e(1+w),(1+w)v]=0
&\implies&[e,wv]+[ew,v]=0\\
&\implies&2ewv-wve-vew=0\\
&\implies&ewv=\frac{1}{2}(wve+vew)
\end{eqnarray*}

We use now the key fact that when an average of two unitaries is unitary, then the three unitaries involved are in fact all equal. This gives:
$$ewv=wve=vew$$

Thus we obtain $[w,e]=[w,v]=0$, so $w,e,v$ commute. Our matrix becomes:
$$H=\begin{pmatrix}a&b&c\\ eva&w^2evb&wevc\\ va&wvb&w^2vc\end{pmatrix}$$

Now by remembering that $u=ev$, this gives the formula in the statement.
\end{proof}

We can now formulate our main classification result, as follows:

\begin{theorem}
The $3\times 3$ Hadamard matrices are all classical, and are all equivalent to the Fourier matrix $F_3$.
\end{theorem}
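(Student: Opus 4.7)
The statement has two parts: every $3\times 3$ Hadamard matrix is classical, and every such matrix is equivalent to $F_3$. I would treat them in sequence.

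For classicality, I start from the parametrization in Proposition 2.5, which gives the pairwise commutations within $(a,b,c)$ and within $(u,v,w)$; only the nine mixed commutators remain. From column $1$, $[a,ua]=0$ forces $aua=ua^2$, hence $[a,u]=0$, and similarly $[a,v]=0$. From columns $2$ and $3$, the same device applied to $[b,w^2ub]=[b,wvb]=0$ and $[c,wuc]=[c,w^2vc]=0$ yields the composite relations $[b,w^2u]=[b,wv]=0$ and $[c,wu]=[c,w^2v]=0$. Setting $\alpha=awa^{-1}$, $\beta=bwb^{-1}$, $\gamma=cwc^{-1}$ (each of order dividing $3$), the column relations encode $bub^{-1}=\beta w^2u$, $cuc^{-1}=\gamma^2 wu$, and the analogues involving $v$.

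Next I exploit the row-$2$ and row-$3$ commutations. Expanding $[ua,w^2ub]=0$ using $[a,u]=0$ and the formula for $bub^{-1}$ reduces it to $\alpha^2 u^2=\beta wu^2$, which upon right-multiplication by $u^{-2}$ gives $\alpha^2=\beta w$. Similar manipulations with $[ua,wuc]=0$ yield $\alpha=\gamma^2 w^2$, with $[va,w^2vc]=0$ yield $\alpha^2=\gamma w$, and with $[w^2ub,wuc]=0$ yield $\beta^2=\gamma w$. Comparing the first and third, $\beta=\gamma$; substituting into the fourth, $\beta^2=\beta w$ and hence $\beta=w$. Then $\gamma=w$, and the second gives $\alpha=\gamma^2w^2=w^4=w$. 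Therefore $[a,w]=[b,w]=[c,w]=0$, and the composite column relations now split into $[b,u]=[b,v]=[c,u]=[c,v]=0$. All nine mixed commutators vanish, so $H$ is classical.

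Once $H$ is classical, every unitary is central, and Proposition 1.5 lets me multiply rows and columns by arbitrary unitaries. Multiplying columns $1,2,3$ by $a^*,b^*,c^*$ normalizes the first row to $(1,1,1)$, and then multiplying rows $2,3$ by $u^*,v^*$ reduces $H$ to
$$\begin{pmatrix}1&1&1\\1&w^2&w\\1&w&w^2\end{pmatrix};$$
swapping the last two rows produces $F_3=(w^{ij})$. The relation $1+w+w^2=0$ guarantees that $w$ is fibrewise a primitive cube root of unity on the Gelfand spectrum, so the identification is uniform. The main technical obstacle is the identity-juggling in the second paragraph; after it, the equivalence to $F_3$ is routine bookkeeping.
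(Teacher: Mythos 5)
Your argument is correct, but it establishes classicality by a genuinely different route than the paper. The paper also starts from Proposition 2.5, but then proceeds by (a) showing that $uv$ commutes with $b$ and $c$, hence is central among the entries, and reducing by homogeneity to the case $u=v^*$; (b) proving the identity $[abc,vw^*]=0$ by a chain of rewritings; and (c) cascading through commutators ($[ac,vw^*]=0$, then $[ac,v]=0$, $[c,v]=0$, $[c,w]=0$, and so on) until all of $a,b,c,v,w$ commute. You instead encode the unknown mixed commutators in the conjugates $\alpha=awa^*$, $\beta=bwb^*$, $\gamma=cwc^*$ and extract from four row-commutation relations the system $\alpha^2=\beta w$, $\alpha=\gamma^2w^2$, $\alpha^2=\gamma w$, $\beta^2=\gamma w$, whose only solution is $\alpha=\beta=\gamma=w$, killing all nine commutators at once; I verified the four reductions, which do use implicitly the easy auxiliary facts $[\alpha,u]=[\alpha,v]=0$, $[\beta,w^2u]=[\beta,wv]=0$, $[\gamma,wu]=[\gamma,w^2v]=0$, and these should be stated, but everything checks out. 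Your route is more systematic and avoids the paper's somewhat delicate normalization step ``we may assume $u=v^*$''; the paper's route needs no auxiliary variables and works directly with the entries. On the second half you in fact do more than the paper, whose written proof stops at commutativity: your dephasing and row swap are fine, but note that the entries used for dephasing are central only in the subalgebra they generate (so either pass to that subalgebra or check directly that these multiplications preserve the Hadamard property), and that over a disconnected Gelfand spectrum $w$ may equal $e^{2\pi i/3}$ on some components and $e^{-2\pi i/3}$ on others, so the identification with the scalar matrix $F_3$ is really fibrewise, up to a locally constant swap of the last two rows --- a point the paper's own proof glosses over entirely.
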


\begin{proof}
We know from Proposition 2.5 that we can write our matrix in the following way, where $(a,b,c)$ and $(u,v,w)$ pairwise commute, and where $1+w+w^2=0$:
$$H=\begin{pmatrix}a&b&c\\ au&buw&cuw^*\\ av&bvw^*&cvw\end{pmatrix}$$

We also know that $(a,u,v)$, $(b,uw,vw^*)$, $(c,uw^*,vw)$ and $(ab,ac,bc,w)$ have entries which pairwise commute. We first show that $uv$ is central. Indeed, we have:
$$buv
=buvww^*
=b(uw)(vw^*)
=(uw)(vw^*)b
=uvb$$

Similarly, $cuv=uvc$. It follows that we may in fact suppose that $uv$ is a scalar. But since our relations are homogeneous, we may assume in fact that $u=v^*$.

Let us now show that $[abc,vw^*]=0$. Indeed, we have:
\begin{eqnarray*} 
abc
&=&a(bc)ww^*
=aw(bc)w^*
=av(wv^*)bcw^*
=avb(wv^*)cw^*
=v(ab)wv^*cw^*\\
&=&vw(ab)v^*cw^*
=vw(ab)w(w^*v^*)cw^*
=vw^2(ab)c(w^*v^*)w^*
=vw^*abcv^*w
\end{eqnarray*}

We know also that $[b,vw^*]=0$. Hence $[ac,vw^*]=0$. But $[ac,w^*]=0$. Hence $[ac,v]=0$. But $[a,v]=0$. Hence $[c,v]=0$. But $[c,vw]=0$. So $[c,w]=0$. But $[bc,w]=0$. So $[b,w]=0$. But $[b,v^*w]=0$ and $[ab,w]=0$, so respectively $[b,v]=0$ and $[a,w]=0$. Thus all operators $a,b,c,v,w$ pairwise commute, and we are done.
\end{proof}

At $N=4$ now, the classical theory uses the fact that an equation of type $a+b+c+d=0$ with $|a|=|b|=|c|=|d|=1$ must be, up to a permutation of the terms, a ``trivial'' equation of the form $a-a+b-b=0$. In our setting, however, we have for instance:
$$\begin{pmatrix}a&0\\ 0&x\end{pmatrix}+\begin{pmatrix}-a&0\\ 0&y\end{pmatrix}+\begin{pmatrix}b&0\\ 0&-x\end{pmatrix}+\begin{pmatrix}-b&0\\ 0&-y\end{pmatrix}=0$$

It is probably possible to further complicate this kind of identity, and this makes the $N=4$ classification a quite difficult task. We have however the following statement:

\begin{conjecture}
The Hadamard matrices of small size are as follows:
\begin{enumerate}
\item The matrices in Proposition 1.8 are the only ones at $N=4$.

\item There is no $5\times 5$ Hadamard matrix with self-adjoint entries.
\end{enumerate}
\end{conjecture}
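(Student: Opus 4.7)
The plan is to handle both parts of the conjecture by mimicking the strategy of Section 2: first normalize the matrix using Proposition 1.5, then derive a structural canonical form from the orthogonality and commutation axioms, and finally squeeze out additional commutations to either match the conjectured form or reach a contradiction.

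For part (1), I would start from an arbitrary $H\in M_4(A)$, normalized so that as many entries of the first row and column as possible are central units. The target is a $2\times 2$ block structure: after reordering rows and columns, the first two rows should satisfy $H_{i,j}=\pm H_{i,j+2}$ (and the analogous pattern for rows 3 and 4), so that $H=F_2\otimes_Q F_2$ in the sense of Theorem 1.7, for a suitable rectangular $Q$. The starting input is the orthogonality between rows 1 and 2, which is a sum of four unitaries equal to zero. Unlike the three-term case (Proposition 2.2), such sums are not rigid in the noncommutative setting, as illustrated by the explicit block-diagonal counterexample in the excerpt. I would therefore bring in all remaining constraints simultaneously: the commutations along every row and every column, together with the five other row-orthogonality relations. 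The hope is that the combined constraints pin down the $F_2\otimes_Q F_2$ pairing, in the same way that Proposition 2.5 sharpens Proposition 2.3 by exploiting the extra commutations $[H_{22},H_{32}]=0$ and $[H_{23},H_{33}]=0$.

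For part (2), fix two rows $i\neq k$ and set $s_j=H_{ij}H_{kj}$ for $j=1,\ldots,5$. Since the entries are self-adjoint and each column generates a commutative algebra, each $s_j$ is itself a symmetry, $s_j^*=s_j$ and $s_j^2=1$, and the orthogonality of rows $i,k$ reads
$$s_1+s_2+s_3+s_4+s_5=0.$$
If the five symmetries were to pairwise commute they would generate a commutative unital $C^*$-algebra, and evaluating the above equation at any Gelfand character would yield a signed sum of five units equal to zero, which is impossible by parity since $5$ is odd. The plan is therefore to force the $s_j$ to commute by combining the row commutations on rows $i,k$, the column commutations between distinct columns, and the orthogonality relations with the remaining three rows. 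A successful execution would likely mirror the algebraic gymnastics at the end of Theorem 2.6, where commutations are extracted from iterated applications of $w^2=-1-w$ and the ``average of two unitaries'' trick.

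The main obstacle in both parts is the same: the classical vanishing-sum rigidity, which drives the usual low-$N$ classifications, fails in the noncommutative setting, as Section 2 itself flags at four terms. Overcoming it requires using the full strength of all axioms at once rather than inspecting a single orthogonality sum in isolation, and I expect this to be genuinely difficult, consistent with both statements being labelled only as conjectures.
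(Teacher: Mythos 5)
The statement you are addressing is left as a conjecture in the paper, and the paper offers no proof at all: its only support is indirect, namely unreported computations over $A=M_2(\mathbb C)$ and the classification of quantum subgroups $G\subset S_4^+$ from \cite{bb1}, together with what can in principle be extracted from \cite{jms} at $N=5$. Your proposal is likewise not a proof but a programme, and the two places where you write that you ``hope'' or ``plan'' to force the missing relations are precisely the open content. For part (1), nothing in your outline actually produces the column pairing needed to exhibit a $4\times 4$ Hadamard matrix as $F_2\otimes_QF_2$: the axioms only give commutation of entries lying in a common row or a common column, and at $N=3$ the extra leverage came from the rigidity of three-term vanishing sums of unitaries (Proposition 2.2), which has no four-term analogue -- the paper's block-diagonal identity is exactly the counterexample -- so you would need a genuinely new mechanism relating entries in different rows and different columns, and none is indicated.

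For part (2), your reduction is correct as far as it goes and is a worthwhile conditional statement: with $s_j=H_{ij}H_{kj}$, self-adjointness plus column commutation make each $s_j$ a symmetry, row orthogonality gives $s_1+\cdots+s_5=0$, and if the $s_j$ pairwise commuted, evaluating at a character of the commutative $C^*$-algebra they generate would give a vanishing sum of five signs, impossible by parity. But the commutativity is the entire difficulty, and it cannot come from the vanishing sum alone: five symmetries can sum to zero noncommutatively, for instance the trace-zero symmetries $s_j=n_j\cdot\vec\sigma$ in $M_2(\mathbb C)$ with $n_1,\ldots,n_5$ the vertices of a regular pentagon in a plane of the Bloch sphere. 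The Hadamard axioms give $[H_{ij},H_{ij'}]=[H_{kj},H_{kj'}]=[H_{ij},H_{kj}]=0$ but impose nothing on $[H_{ij},H_{kj'}]$ for $j\neq j'$, so the relations $[s_j,s_{j'}]=0$ would have to be extracted from the orthogonality with the remaining three rows, and you do not show how. In short, both parts remain exactly as open after your argument as before it; what you have is a sensible plan of attack plus, in part (2), a clean equivalent reformulation of the obstruction, not a proof.
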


We have no computer-assisted evidence for this statement, because all this problematics is quite hard to implement on a computer, but we have instead some encouraging computations at $A=M_2(\mathbb C)$, that we will not detail here, along with some supporting theoretical evidence, coming from \cite{bb1}, \cite{jms}. Indeed, as explained in section 3 below, any Hadamard matrix $H\in M_N(A)$ produces a quantum permutation group $G\subset S_N^+$. The point now is that the quantum subgroups $G\subset S_4^+$ were classified in \cite{bb1}, and the classification of the quantum subgroups $G\subset S_5^+$ is something that can be in principle deduced from \cite{jms}, and looking at the list of the possible candidates gives some evidence for the above conjecture. There is of course some serious work to be done here.

Summarizing, the situation in our generalized Hadamard setting is more complicated than the one in the classical case, fully discussed by Haagerup in \cite{ha1}, up to $N=5$.

There are many interesting classes of special Hadamard matrices, such as the circulant ones \cite{bjo}, \cite{ha2}, the master ones \cite{aff}, and the McNulty-Weigert ones \cite{mwe}, whose theory waits to be extended to the noncommutative setting. Equally interesting is the defect problematics in \cite{tz2}, but an extension here would probably require some geometric conditions, such as assuming that we have a random matrix algebra, $A=M_K(C(X))$.

\section{Quantum permutation groups}

We recall that a magic unitary matrix is a square matrix over a $C^*$-algebra, $u\in M_N(A)$, whose entries are projections ($p^2=p^*=p$), summing up to $1$ on each row and each column. The following key definition is due to Wang \cite{wa1}:

\begin{definition}
$C(S_N^+)$ is the universal $C^*$-algebra generated by the entries of a $N\times N$ magic unitary matrix $u=(u_{ij})$, with the morphisms given by
$$\Delta(u_{ij})=\sum_ku_{ik}\otimes u_{kj}\quad,\quad\varepsilon(u_{ij})=\delta_{ij}\quad,\quad S(u_{ij})=u_{ji}$$
as comultiplication, counit and antipode. 
\end{definition}

This algebra satisfies Woronowicz' axioms in \cite{wo1}, \cite{wo2}, and the underlying noncommutative space $S_N^+$ is therefore a compact quantum group, called quantum permutation group. As a first observation, we have a compact quantum group inclusion $S_N\subset S_N^+$, coming from the representation $\pi:C(S_N^+)\to C(S_N)$ constructed as follows:
$$u_{ij}\to\chi\left(\sigma\in S_N\Big|\sigma(j)=i\right)$$

It is known that this inclusion is an isomorphism at $N=2,3$, but not at $N\geq4$, where $S_N^+$ is non-classical, and infinite. Moreover, it is known that we have $S_4^+\simeq SO_3^{-1}$, and that any $S_N^+$ with $N\geq4$ has the same fusion semiring as $SO_3$. See \cite{bb3}, \cite{wa1}.

The complex Hadamard matrices are known to produce magic matrices, which in turn produce quantum permutation groups. In our generalized setting, we have:

\begin{theorem}
If $H\in M_N(A)$ is Hadamard, the matrices $P_{ij}\in M_N(A)$ defined by
$$(P_{ij})_{ab}=\frac{1}{N}H_{ia}H_{ja}^*H_{jb}H_{ib}^*$$
form altogether a magic matrix $P=(P_{ij})$, over the algebra $M_N(A)$.
\end{theorem}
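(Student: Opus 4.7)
The task is to verify the four defining properties of a magic matrix: each $P_{ij}$ is self-adjoint and idempotent, and $\sum_j P_{ij}=1=\sum_i P_{ij}$. Throughout, I will exploit three ingredients: the unitarity of each $H_{ij}$ (so $H_{ij}H_{ij}^*=H_{ij}^*H_{ij}=1$), the row/column commutation axiom, and the orthogonality relations $\sum_k H_{ik}H_{jk}^* = \sum_k H_{ki}^*H_{kj} = N\delta_{ij}$ from Proposition~1.4.

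Self-adjointness is immediate by inspection, since $(P_{ij})_{ab}^* = \tfrac{1}{N}H_{ib}H_{jb}^*H_{ja}H_{ia}^*$ matches the formula for $(P_{ij})_{ba}$, with no commutations needed. For idempotency, I compute
\[
(P_{ij}^2)_{ac} = \tfrac{1}{N^2}\sum_b H_{ia}H_{ja}^*H_{jb}H_{ib}^*\cdot H_{ib}H_{jb}^*H_{jc}H_{ic}^*,
\]
where the middle factor $H_{ib}^*H_{ib}=1$ by unitarity collapses each term to $H_{ia}H_{ja}^*H_{jb}H_{jb}^*H_{jc}H_{ic}^* = H_{ia}H_{ja}^*H_{jc}H_{ic}^*$, and summing over $b$ produces a factor $N$, giving $(P_{ij})_{ac}$.

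For the row sum, $H_{ia}$ and $H_{ib}^*$ carry no $j$-dependence, so
\[
\sum_j (P_{ij})_{ab} = \tfrac{1}{N}H_{ia}\Bigl(\sum_j H_{ja}^*H_{jb}\Bigr)H_{ib}^* = \delta_{ab}H_{ia}H_{ib}^* = \delta_{ab},
\]
using column orthogonality. For the column sum, the obstacle is that the relevant orthogonality is $\sum_i H_{ia}H_{ib}^* = N\delta_{ab}$, but in the expression $H_{ia}H_{ja}^*H_{jb}H_{ib}^*$ the factors $H_{ia}$ and $H_{ib}^*$ are separated by terms depending on $j$. I therefore first commute $H_{ja}^*$ past $H_{ia}$ (same column $a$) and $H_{jb}$ past $H_{ib}^*$ (same column $b$), rewriting the summand as $H_{ja}^*H_{ia}H_{ib}^*H_{jb}$, after which row orthogonality yields $N\delta_{ab}H_{ja}^*H_{jb}=N\delta_{ab}$.

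The main obstacle in the whole argument is disciplined bookkeeping: axiom (2) of Definition~1.1 only permits swapping entries that share a row or a column, so at each step one must check that the two factors being commuted share an index. Once the four products above are rearranged so that every commutation is between entries in the same column (or same row), the magic unitary conditions fall out of the four orthogonality relations simultaneously.
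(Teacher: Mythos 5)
Your proof is correct and follows essentially the same route as the paper's: unitarity collapses the telescoping product for idempotency, the relation $H^*H=N1_N$ handles the row sums, and the column-commutation axiom plus $H^t\bar H=N1_N$ handles the column sums. The only cosmetic slip is the label at the end: the relation $\sum_i H_{ia}H_{ib}^*=N\delta_{ab}$ you invoke is $H^t\bar H=N1_N$, the second form of column orthogonality (not row orthogonality), and it comes from Proposition 1.4(3) rather than from the two relations you list at the outset.
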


\begin{proof}
The magic condition can be checked in three steps, as follows:

(1) Let us first check that each $P_{ij}$ is a projection, i.e. that we have $P_{ij}=P_{ij}^*=P_{ij}^2$. Regarding the first condition, namely $P_{ij}=P_{ij}^*$, this simply follows from:
$$(P_{ij})_{ba}^*=\frac{1}{N}(H_{ib}H_{jb}^*H_{ja}H_{ia}^*)^*=\frac{1}{N}H_{ia}H_{ja}^*H_{jb}H_{ib}^*=(P_{ij})_{ab}$$

As for the second condition, $P_{ij}=P_{ij}^2$, this follows from the fact that all the entries $H_{ij}$ are assumed to be unitaries, i.e. follows from axiom (1) in Definition 1.1:
\begin{eqnarray*}
(P_{ij}^2)_{ab}
&=&\sum_c(P_{ij})_{ac}(P_{ij})_{cb}
=\frac{1}{N^2}\sum_cH_{ia}H_{ja}^*H_{jc}H_{ic}^*H_{ic}H_{jc}^*H_{jb}H_{ib}^*\\
&=&\frac{1}{N}H_{ia}H_{ja}^*H_{jb}H_{ib}^*
=(P_{ij})_{ab}
\end{eqnarray*}

(2) Let us check now that fact that the entries of $P$ sum up to 1 on each row. For this purpose we use the equality $H^*H=N1_N$, coming from the axiom (3), which gives:
$$(\sum_jP_{ij})_{ab}
=\frac{1}{N}\sum_jH_{ia}H_{ja}^*H_{jb}H_{ib}^*
=\frac{1}{N}H_{ia}(H^*H)_{ab}H_{ib}^*
=\delta_{ab}H_{ia}H_{ib}^*
=\delta_{ab}$$

(3) Finally, let us check that the entries of $P$ sum up to 1 on each column. This is the trickiest check, because it involves, besides axiom (1) and the formula $H^t\bar{H}=N1_N$ coming from axiom (3), the commutation on the columns of $H$, coming from axiom (2):
\begin{eqnarray*}
(\sum_iP_{ij})_{ab}
&=&\frac{1}{N}\sum_iH_{ia}H_{ja}^*H_{jb}H_{ib}^*
=\frac{1}{N}\sum_iH_{ja}^*H_{ia}H_{ib}^*H_{jb}\\
&=&\frac{1}{N}H_{ja}^*(H^t\bar{H})_{ab}H_{jb}
=\delta_{ab}H_{ja}^*H_{jb}
=\delta_{ab}
\end{eqnarray*}

Thus $P$ is indeed a magic matrix in the above sense, and we are done.
\end{proof}

By using now the general theory of Hopf images from \cite{bb2}, we can formulate:

\begin{definition}
The quantum permutation group $G\subset S_N^+$ associated to an Hadamard matrix $H\in M_N(A)$ is constructed as follows:
\begin{enumerate}
\item We construct a magic matrix $P\in M_N(M_N(A))$ as above.

\item We let $\pi:C(S_N^+)\to M_N(A)$ be the representation associated to $P$.

\item We factorize $\pi:C(S_N^+)\to C(G)\to M_N(A)$, with $G\subset S_N^+$ chosen minimal.
\end{enumerate}
\end{definition}

As an illustration, consider a usual Hadamard matrix $H\in M_N(\mathbb C)$. If we denote its rows by $H_1,\ldots,H_N$ and we consider the vectors $\xi_{ij}=H_i/H_j$, then we have:
$$\xi_{ij}=\left(\frac{H_{i1}}{H_{j1}},\ldots,\frac{H_{iN}}{H_{jN}}\right)$$

Thus the orthogonal projection on this vector $\xi_{ij}$ is given by:
$$(P_{\xi_{ij}})_{ab}=\frac{1}{||\xi_{ij}||^2}(\xi_{ij})_a\overline{(\xi_{ij})_b}=\frac{1}{N}H_{ia}H_{ja}^*H_{jb}H_{ib}^*=(P_{ij})_{ab}$$

We conclude that we have $P_{ij}=P_{\xi_{ij}}$ for any $i,j$, so our construction from Definition 3.3 is compatible with the construction for the usual complex Hadamard matrices.

At the general level, there are many interesting questions, and notably that of understanding how the idempotent state theory from \cite{bfs}, \cite{wa2} can be applied in this setting, in order to obtain an explicit formula for the moments of the main character of $G$.

\section{Free wreath products}

We discuss here the computation of the quantum permutation groups associated to the deformed tensor products of Hadamard matrices. In the classical case this is a key problem, and several results on the subject were obtained in \cite{ba2}, \cite{bb3}, \cite{bi2}.

Let us begin with a study of the associated magic unitary. Regarding the deformed tensor products of Hadamard matrices, we use here the general notations from Theorem 1.7 above. Regarding the quantum algebra part, we use here the various notions introduced in section 3 above, and notably the magic unitary matrix $P=(P_{ij})$ introduced in Theorem 3.2, that we will write now with double indices, $P=(P_{ia,jb})$. We have:

\begin{proposition}
The magic unitary associated to $H\otimes_QK$ is given by
$$P_{ia,jb}=R_{ij}\otimes\frac{1}{N}(Q_{ic}Q_{jc}^*Q_{jd}Q_{id}^*\cdot K_{ac}K_{bc}^*K_{bd}K_{ad}^*)_{cd} $$
where $R_{ij}$ is the magic unitary matrix associated to $H$.
\end{proposition}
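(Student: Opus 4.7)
The plan is to reduce this to a direct unpacking of Definition/Theorem~3.2 applied to $L=H\otimes_QK$, with heavy use of the pairwise commutation of $\langle H_{ij}\rangle$, $\langle K_{ab}\rangle$, $\langle Q_{ib}\rangle$ granted in the hypothesis of Theorem~1.7.

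First I would write the magic unitary formula from Theorem~3.2 for the $NM\times NM$ matrix $L$, using double indices $(i,a)$, $(j,b)$ for the ``outer'' slots and $(k,c)$, $(l,d)$ for the ``inner'' slots. Thus
\begin{eqnarray*}
(P_{ia,jb})_{kc,ld}
&=&\frac{1}{NM}\,L_{ia,kc}\,L_{jb,kc}^*\,L_{jb,ld}\,L_{ia,ld}^*.
\end{eqnarray*}
Next I substitute the definition $L_{ia,kc}=Q_{ic}H_{ik}K_{ac}$ (and its three variants, taking adjoints where required) into this product. This produces a string of twelve unitary factors, four coming from $H$, four from $Q$, and four from $K$.

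The main computational step is the regrouping. Using that the three generating algebras pairwise commute, I collect the four $H$-factors together, then the four $Q$-factors, then the four $K$-factors. The $H$-block becomes $H_{ik}H_{jk}^*H_{jl}H_{il}^*$, which by Theorem~3.2 applied to $H$ itself is exactly $N\cdot(R_{ij})_{kl}$. The $Q$-block becomes $Q_{ic}Q_{jc}^*Q_{jd}Q_{id}^*$ and the $K$-block becomes $K_{ac}K_{bc}^*K_{bd}K_{ad}^*$. Combining with the $\frac{1}{NM}$ prefactor, the $N$ from $R_{ij}$ absorbs part of the normalization, leaving
\begin{eqnarray*}
(P_{ia,jb})_{kc,ld}
&=&(R_{ij})_{kl}\cdot\frac{1}{M}\,Q_{ic}Q_{jc}^*Q_{jd}Q_{id}^*\cdot K_{ac}K_{bc}^*K_{bd}K_{ad}^*,
\end{eqnarray*}
which is precisely the $(kc,ld)$-entry of the tensor product asserted in the statement.

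There is no real obstacle here beyond bookkeeping: the only thing one must be careful about is that the commutations being invoked are between the three subalgebras, not within them, so the internal order of the $H$-factors (and of the $Q$- and $K$-factors) must be preserved exactly as it comes out of the expansion. Once the three blocks are separated in the correct order, identifying the $H$-block with $R_{ij}$ is immediate from Theorem~3.2, and the remaining factor is read off as the second tensor slot.
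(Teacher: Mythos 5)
Your argument is correct and is essentially the paper's own proof: substitute $L_{ia,kc}=Q_{ic}H_{ik}K_{ac}$ into the formula of Theorem 3.2, regroup the twelve unitary factors using the pairwise commutation of $\langle H_{ij}\rangle$, $\langle K_{ab}\rangle$, $\langle Q_{ib}\rangle$, and identify the $H$-block with $N\,(R_{ij})_{kl}$, exactly as the paper does before rewriting the result via matrix units as a product of two commuting tensor legs. Note also that the normalization $\frac{1}{M}$ you obtain on the second tensor factor is the correct one (the $\frac{1}{N}$ in the stated proposition is a slip), and it is consistent with the paper's proof, which simply keeps the overall prefactor $\frac{1}{MN}$.
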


\begin{proof}
With the above-mentioned conventions for deformed tensor products and for double indices, the entries of $L=H\otimes_QK$ are by definition the following elements: 
$$L_{ia,jb}=Q_{ib}H_{ij}K_{ab}$$

Thus the projections $P_{ia,jb}$ constructed in Theorem 3.2 are given by:
\begin{eqnarray*}
(P_{ia,jb})_{kc,ld}
&=&\frac{1}{MN}L_{ia,kc}L_{jb,kc}^*L_{jb,ld}L_{ia,ld}^*\\
&=&\frac{1}{MN}(Q_{ic}H_{ik}K_{ac})(Q_{jc}H_{jk}K_{bc})^*(Q_{jd}H_{jl}K_{bd})(Q_{id}H_{il}K_{ad})^*\\
&=&\frac{1}{MN}(Q_{ic}Q_{jc}^*Q_{jd}Q_{id}^*)(H_{ik}H_{jk}^*H_{jl}H_{il}^*)(K_{ac}K_{bc}^*K_{bd}K_{ad}^*)
\end{eqnarray*}

In terms now of the standard matrix units $e_{kl},e_{cd}$, we have:
\begin{eqnarray*}
P_{ia,jb}
&=&\frac{1}{MN}\sum_{kcld}e_{kl}\otimes e_{cd}\otimes(Q_{ic}Q_{jc}^*Q_{jd}Q_{id}^*)(H_{ik}H_{jk}^*H_{jl}H_{il}^*)(K_{ac}K_{bc}^*K_{bd}K_{ad}^*)\\
&=&\frac{1}{MN}\sum_{kcld}\left(e_{kl}\otimes 1\otimes H_{ik}H_{jk}^*H_{jl}H_{il}^*\right)(1\otimes e_{cd}\otimes Q_{ic}Q_{jc}^*Q_{jd}Q_{id}^*\cdot K_{ac}K_{bc}^*K_{bd}K_{ad}^*)
\end{eqnarray*}

Since the quantities on the right commute, this gives the formula in the statement.
\end{proof}

In order to investigate the Di\c t\u a deformations, we use:

\begin{definition}
Let $C(S_M^+)\to A$ and $C(S_N^+)\to B$ be Hopf algebra quotients, with fundamental corepresentations denoted $u,v$. We let
$$A*_wB=A^{*N}*B/<[u_{ab}^{(i)},v_{ij}]=0>$$
with the Hopf algebra structure making $w_{ia,jb}=u_{ab}^{(i)}v_{ij}$ a corepresentation.
\end{definition}

The fact that we have indeed a Hopf algebra follows from the fact that $w$ is magic. In terms of quantum groups, if $A=C(G)$, $B=C(H)$, we write $A*_wB=C(G\wr_*H)$:
$$C(G)*_wC(H)=C(G\wr_*H)$$

The $\wr_*$ operation is the free analogue of $\wr$, the usual wreath product. See \cite{bi1}. 

With this convention, we have the following result:

\begin{theorem}
The representation associated to $L=H\otimes_QK$ factorizes as
$$\xymatrix{C(S_{NM}^+)\ar[rr]^{\pi_L}\ar[rd]&&M_{NM}(\mathbb C)\\&C(S_M^+\wr_*G_H)\ar[ur]&}$$
and so the quantum group associated to $L$ appears as a subgroup $G_L\subset S_M^+\wr_*G_H$.
\end{theorem}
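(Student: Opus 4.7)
The plan is to produce a representation $\Phi : C(S_M^+ \wr_* G_H) \to M_{NM}(A)$ whose composition with the canonical surjection $\pi_w : C(S_{NM}^+) \to C(S_M^+ \wr_* G_H)$ classifying the magic $w_{ia,jb}=u_{ab}^{(i)}v_{ij}$ recovers $\pi_L$. By the universal property of the wreath product from Definition 4.2, I would build $\Phi$ out of (a) a morphism $\beta : C(G_H) \to M_{NM}(A)$ sending the fundamental magic to some $\tilde R=(\tilde R_{ij})$, (b) for each $i$ a morphism $\alpha^{(i)} : C(S_M^+) \to M_{NM}(A)$ with magic image $X^{(i)}=(X^{(i)}_{ab})$, and (c) the cross commutation $[X^{(i)}_{ab},\tilde R_{ij}]=0$; the image of $w_{ia,jb}$ under $\Phi$ will then be $X^{(i)}_{ab}\tilde R_{ij}$, and I must arrange this to equal $P_{ia,jb}$.

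For $\beta$, take $\tilde R_{ij} := R_{ij}\otimes 1_M$, the inflation of the magic $R$ of $H$ along $M_N(A)\hookrightarrow M_N(M_M(A))=M_{NM}(A)$; this factors through $C(G_H)$ by the very definition of $G_H$. For the $\alpha^{(i)}$, let $T^{(i,l)}\in M_M(M_M(A))$ denote the second tensor factor of Proposition 4.1, so that that proposition reads $P_{ia,jb}=R_{ij}\otimes T^{(i,j)}_{ab}$; then define
$$X^{(i)}_{ab}:=\sum_{l=1}^N R_{il}\otimes T^{(i,l)}_{ab}\in M_{NM}(A).$$
The point of this choice is that the $R_{il}$, being the entries of the $i$-th row of the magic $R$, form an orthogonal family of projections summing to $1_N$, so splitting $X^{(i)}$ along them gives exactly the freedom to hit $P_{ia,jb}$ after multiplication by $\tilde R_{ij}$.

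Four verifications will then be required. (i) Each $T^{(i,l)}$ is itself a magic unitary in $M_M(M_M(A))$: self-adjointness follows from a direct computation using unitarity of the $Q,K$ entries; idempotency follows from the cancellations $Q_{id}^*Q_{id}=K_{ad}^*K_{ad}=1$, the sum over the intermediate index producing a factor of $M$ that balances the $\frac1M$ normalisation; and the row/column sums reduce to the orthogonality identity $\sum_a K_{ac}K_{bc}^*K_{bd}K_{ad}^* = M\delta_{cd}$ for $K$. (ii) It follows that $X^{(i)}$ is magic, since the $R_{il}$ are pairwise orthogonal for fixed $i$, each $T^{(i,l)}$ is magic, and $\sum_l R_{il}\otimes 1_M = 1$. (iii) The commutation $[X^{(i)}_{ab},\tilde R_{ij}]=0$ holds from $R_{il}R_{ij}=R_{ij}R_{il}=\delta_{lj}R_{ij}$ (both on the same row of $R$), combined with the pairwise commutation of the algebras $\langle H_{ij}\rangle,\langle K_{ab}\rangle,\langle Q_{ib}\rangle$ which ensures the $A$-entries commute. (iv) Finally,
$$X^{(i)}_{ab}\,\tilde R_{ij} = \sum_{l}(R_{il}R_{ij})\otimes T^{(i,l)}_{ab} = R_{ij}\otimes T^{(i,j)}_{ab} = P_{ia,jb}.$$

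The hard part will be item (i), verifying that the somewhat opaque matrix $T^{(i,l)}$ is itself a magic unitary, since this is where the Hadamard axioms for $K$ and the commutation relations for $Q$ must be fully exploited; items (ii)--(iv) are essentially formal bookkeeping, using only orthogonality on the $i$-th row of $R$ and the commutation of subalgebras. Once all four are in place, the $\alpha^{(i)}$ and $\beta$ assemble through the universal property of Definition 4.2 into the desired $\Phi$, and (iv) yields the commutative triangle of the statement, exhibiting $G_L$ as a quantum subgroup of $S_M^+\wr_* G_H$.
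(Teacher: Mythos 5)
Your proposal is correct and is essentially the paper's own proof: by Proposition 4.1 your $X^{(i)}_{ab}=\sum_l R_{il}\otimes T^{(i,l)}_{ab}$ coincides with the paper's $U^{(i)}_{ab}=\sum_j P_{ia,jb}$, and your $\tilde R_{ij}=R_{ij}\otimes 1$ is the paper's $V_{ij}=\sum_a P_{ia,jb}$, so you construct the same factorization through $C(S_M^+\wr_*G_H)$ via the same universal property. The only difference is organizational: where the paper verifies that $U^{(i)}$ is magic and that $U^{(i)}_{ab}V_{ij}=V_{ij}U^{(i)}_{ab}=P_{ia,jb}$ by direct index computations, you reduce these checks to the magicness of each block $T^{(i,l)}$ together with the orthogonality of the row projections $R_{i1},\ldots,R_{iN}$, which is a sound repackaging of the same cancellations.
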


\begin{proof}
We use the formula in Proposition 4.1. For simplifying the writing we agree to use fractions of type $\frac{H_{ia}H_{jb}}{H_{ja}H_{ib}}$ instead of expressions of type $H_{ia}H_{ja}^*H_{jb}H_{ib}^*$, by keeping in mind that the variables are only subject to the commutation relations in Definition 1.1.

Our claim is that the factorization can be indeed constructed, as follows:
$$U_{ab}^{(i)}=\sum_jP_{ia,jb}\quad,\quad V_{ij}=\sum_aP_{ia,jb}$$

Indeed, we have three verifications to be made, as follows:

(1) We must prove that the elements $V_{ij}=\sum_aP_{ia,jb}$ do not depend on $b$, and generate a copy of $C(G_H)$. But if we denote by $(R_{ij})$ the magic for $H$, we have indeed:
$$V_{ij}
=\frac{1}{N}\left(\frac{Q_{ic}Q_{jd}}{Q_{id}Q_{jc}}\cdot\frac{H_{ik}H_{jl}}{H_{il}H_{jk}}\cdot\delta_{cd}\right)_{kc,ld}
=((R_{ij})_{kl}\delta_{cd})_{kc,ld}
=R_{ij}\otimes 1$$

(2) We prove now that for any $i$, the elements $U_{ab}^{(i)}=\sum_jP_{ia,jb}$ form a magic matrix. Since $P=(P_{ia,jb})$ is magic, the elements $U_{ab}^{(i)}=\sum_jP_{ia,jb}$ are self-adjoint, and we have $\sum_bU_{ab}^{(i)}=\sum_{bj}P_{ia,jb}=1$. The fact that each $U_{ab}^{(i)}$ is an idempotent follows from:
\begin{eqnarray*}
((U_{ab}^{(i)})^2)_{kc,ld}
&=&\frac{1}{N^2M^2}\sum_{mejn}\frac{Q_{ic}Q_{je}}{Q_{ie}Q_{jc}}\cdot\frac{H_{ik}H_{jm}}{H_{im}H_{jk}}\cdot\frac{K_{ac}K_{be}}{K_{ae}K_{bc}}\cdot\frac{Q_{ie}Q_{nd}}{Q_{id}Q_{ne}}\cdot\frac{H_{im}H_{nl}}{H_{il}H_{nm}}\cdot\frac{K_{ae}K_{bd}}{K_{ad}K_{be}}\\
&=&\frac{1}{NM^2}\sum_{ejn}\frac{Q_{ic}Q_{je}Q_{nd}}{Q_{jc}Q_{id}Q_{ne}}\cdot\frac{H_{ik}H_{nl}}{H_{jk}H_{il}}\delta_{jn}\cdot\frac{K_{ac}K_{bd}}{K_{bc}K_{ad}}\\
&=&\frac{1}{NM^2}\sum_{ej}\frac{Q_{ic}Q_{je}Q_{jd}}{Q_{jc}Q_{id}Q_{je}}\cdot\frac{H_{ik}H_{jl}}{H_{jk}H_{il}}\cdot\frac{K_{ac}K_{bd}}{K_{bc}K_{ad}}\\
&=&\frac{1}{NM}\sum_j\frac{Q_{ic}Q_{jd}}{Q_{jc}Q_{id}}\cdot\frac{H_{ik}H_{jl}}{H_{jk}H_{il}}\cdot\frac{K_{ac}K_{bd}}{K_{bc}K_{ad}}
=(U_{ab}^{(i)})_{kc,ld}
\end{eqnarray*}

Finally, the condition $\sum_aU_{ab}^{(i)}=1$ can be checked as follows:
$$\sum_aU_{ab}^{(i)}
=\frac{1}{N}\left(\sum_j\frac{Q_{ic}Q_{jd}}{Q_{id}Q_{jc}}\cdot\frac{H_{ik}H_{jl}}{H_{il}H_{jk}}\cdot\delta_{cd}\right)_{kc,ld}
=\frac{1}{N}\left(\sum_j\frac{H_{ik}H_{jl}}{H_{il}H_{jk}}\cdot\delta_{cd}\right)_{kc,ld}=1$$

(3) It remains to prove that we have $U_{ab}^{(i)}V_{ij}=V_{ij}U_{ab}^{(i)}=P_{ia,jb}$. First, we have:
\begin{eqnarray*}
(U_{ab}^{(i)}V_{ij})_{kc,ld}
&=&\frac{1}{N^2M}\sum_{mn}\frac{Q_{ic}Q_{nd}}{Q_{id}Q_{nc}}\cdot\frac{H_{ik}H_{nm}}{H_{im}H_{nk}}\cdot\frac{K_{ac}K_{bd}}{K_{ad}K_{bc}}\cdot\frac{H_{im}H_{jl}}{H_{il}H_{jm}}\\
&=&\frac{1}{NM}\sum_n\frac{Q_{ic}Q_{nd}}{Q_{id}Q_{nc}}\cdot\frac{H_{ik}H_{jl}}{H_{nk}H_{il}}\delta_{nj}\cdot\frac{K_{ac}K_{bd}}{K_{ad}K_{bc}}\\
&=&\frac{1}{NM}\cdot\frac{Q_{ic}Q_{jd}}{Q_{id}Q_{jc}}\cdot\frac{H_{ik}H_{jl}}{H_{jk}H_{il}}\cdot\frac{K_{ac}K_{bd}}{K_{ad}K_{bc}}
=(P_{ia,jb})_{kc,ld}
\end{eqnarray*}

The remaining computation is similar, as follows:
\begin{eqnarray*}
(V_{ij}U_{ab}^{(i)})_{kc,ld}
&=&\frac{1}{N^2M}\sum_{mn}\frac{H_{ik}H_{jm}}{H_{im}H_{jk}}\cdot\frac{Q_{ic}Q_{nd}}{Q_{id}Q_{nc}}\cdot\frac{H_{im}H_{nl}}{H_{il}H_{nm}}\cdot\frac{K_{ac}K_{bd}}{K_{ad}K_{bc}}\\
&=&\frac{1}{NM}\sum_n\frac{Q_{ic}Q_{nd}}{Q_{id}Q_{nc}}\cdot\frac{H_{ik}H_{nl}}{H_{jk}H_{il}}\delta_{jn}\cdot\frac{K_{ac}K_{bd}}{K_{ad}K_{bc}}\\
&=&\frac{1}{NM}\cdot\frac{Q_{ic}Q_{jd}}{Q_{id}Q_{jc}}\cdot\frac{H_{ik}H_{jl}}{H_{jk}H_{il}}\cdot\frac{K_{ac}K_{bd}}{K_{ad}K_{bc}}
=(P_{ia,jb})_{kc,ld}
\end{eqnarray*}

Thus we have checked all the relations, and we are done.
\end{proof}

In general, the problem of further factorizing the above representation is a quite difficult one, even in the classical case. For a number of results here, we refer to \cite{ba2}, \cite{bb3}, \cite{bi2}.

\end{document}